\numberwithin{equation}{section}
\theoremstyle{plain}
\newtheorem{thm}[equation]{Theorem}
\newtheorem{cor}[equation]{Corollary}
\newtheorem{prop}[equation]{Proposition}
\theoremstyle{definition}
\newtheorem{definition}[equation]{Definition}
\newtheorem{example}[equation]{Example}
\newcommand{\Q}{\ensuremath \mathbb{Q}}
\newcommand{\Z}{\ensuremath \mathbb{Z}}
\begin{document}

\title[Odd, spoof perfect factorizations]{Odd, spoof perfect factorizations}

\author[BYU-CNT-Group]{BYU Computational Number Theory Group}
\thanks{A full list of authors may be found at: \url{https://math.byu.edu/~pace/spoof_authors.pdf}}

\keywords{multiplicative function, odd perfect number, spoof}
\subjclass[2010]{Primary 16D70, Secondary 16P70, 16U60, 16U80}

\begin{abstract}
We investigate the integer solutions of Diophantine equations related to perfect numbers.  These solutions generalize the example, found by Descartes in 1638, of an odd, ``spoof'' perfect factorization $3^2\cdot 7^2\cdot 11^2\cdot 13^2\cdot 22021^1$.  More recently, Voight found the spoof perfect factorization $3^4\cdot 7^2\cdot 11^2\cdot 19^2\cdot(-127)^1$.  No other examples appear in the literature.  We compute all nontrivial, odd, primitive spoof perfect factorizations with fewer than seven bases---there are twenty-one in total.

We show that the structure of odd, spoof perfect factorizations is extremely rich, and there are multiple infinite families of them.  This implies that certain approaches to the odd perfect number problem that use only the multiplicative nature of the sum-of-divisors function are unworkable.  On the other hand, we prove that there are only finitely many nontrivial, odd, primitive spoof perfect factorizations with a fixed number of bases.
\end{abstract}

\maketitle

\section{Introduction}

Let $\sigma$ denote the sum-of-divisors function.  A positive integer $n$ is said to be \emph{perfect} if $\sigma(n)=2n$; in other words, the sum of the proper divisors of $n$ equals $n$.  At present there are fifty-one perfect numbers known, all of the form $2^{p-1}(2^p-1)$ where both $p$ and $2^{p}-1$ are prime.  The smallest example is when $p=2$, and the current largest example is when $p=82589933$.

Euclid, in his \emph{Elements}, defined the perfect numbers and proved that if $2^p-1$ is prime (which necessitates $p$ being prime), then $2^{p-1}(2^p-1)$ is perfect.  Euler proved, conversely, that every even perfect number is of this form.  Two of the oldest open problems in mathematics are whether there are infinitely many perfect numbers, and whether any of them are odd.  This paper focuses on the second problem.

Many of the methods employed in the study of odd perfect numbers (hereafter denoted OPNs) apply to a much broader class of structures.  To motivate this generalization, we consider the (nonprime) factorization $n=3^2\cdot 7^2\cdot 11^2\cdot 13^2\cdot 22021^1$ discovered by Descartes.  Recall that $\sigma$ is multiplicative, and for any prime $p$ and any positive integer $a$ we have $\sigma(p^a)=1+p+\cdots+p^a$.  Thus, we might (falsely) compute
\begin{eqnarray*}
\sigma(n) & = & \sigma(3^2\cdot 7^2\cdot 11^2\cdot 13^2\cdot 22021^1) \\
& = & (1+3+3^2)(1+7+7^2)(1+11+11^2)(1+13+13^2)(1+22021)=2n.
\end{eqnarray*}
The problem is that $22021=19^2\cdot 61$, so $22021$ is not prime and the second equality above is false.  However, the given factorization of $n$ satisfies the condition for an OPN if we \emph{pretend} that $22021$ is prime and apply the usual rules for $\sigma$.  This motivates the following definitions.

\begin{definition}
Let $n\in \Z$.  We call an expression of the form $n=\prod_{i=1}^{k}x_i^{a_i}$, where each $x_i\in \Z$ and each $a_i\in \Z_{\geq 1}$, a \emph{factorization} of $n$.  We call each $x_i$ a \emph{base} of the factorization, and each $a_i$ is the corresponding \emph{exponent} of the $i$th base.  If, moreover, each $x_i\in \Z_{\geq 1}$, then the factorization is \emph{positive}.  A factorization is \emph{odd} when $n$ is odd; otherwise it is \emph{even}.

We define a function $\tilde{\sigma}$ on the collection of ordered pairs describing such a factorization by the rule
\[
\tilde{\sigma}\Big(\{(x_i,a_i):1\leq i\leq k\}\Big):=\prod_{i=1}^{k}\left(\sum_{j=0}^{a_i}x_i^{j} \right).
\]
As an abuse of notation, we will write $\tilde{\sigma}(\prod_{i=1}^{k}x_i^{a_i})$ instead of $\tilde{\sigma}(\{(x_i,a_i):1\leq i\leq k\})$.  We see that $\tilde{\sigma}(\prod_{i=1}^{k}x_i^{a_i})$ agrees with $\sigma(n)$ when the bases are distinct, positive primes.

A factorization as above is \emph{spoof perfect} if $\tilde{\sigma}(\prod_{i=1}^{k}x_i^{a_i})=2\prod_{i=1}^{k}x_i^{a_i}$.  (Note that the prime factorizations of actual perfect numbers are being treated as spoof perfect factorizations.  We use the word ``spoof'' rather than something like ``generalized'' for historical reasons.)
\end{definition}

\begin{example}
Descartes's example is a positive, odd, spoof perfect factorization.  Banks, G\"ulo\u{g}lu, Nevans, and Saidak\ \cite{Banks} searched for other positive, odd, spoof perfect factorizations of a form similar to Descartes's example, and Dittmer \cite{Dittmer} searched for an even more general class of positive, odd, spoof perfect factorizations.  Neither was successful in finding any additional examples.
\end{example}

\begin{example}
Voight in \cite{Voight} found the odd, spoof perfect factorization $3^4\cdot 7^2\cdot 11^2\cdot 19^2\cdot (-127)^{1}$, which is not positive.  Previous to this paper, Voight's and Descartes's examples were the only odd, spoof perfect factorizations to have been published.
\end{example}

\begin{example}\label{SpoofEvenPerfectFirstForm}
For each positive integer $a\in \Z_{\geq 1}$, the factorization $2^{a-1}\cdot(2^a-1)^1$ is even and spoof perfect.  Dittmer found other infinite families of even, spoof perfect factorizations in \cite{Dittmer}.
\end{example}

These examples suggest a natural dichotomy between the spoof perfect factorizations that are odd and those that are even.  We are able to prove that this dichotomy holds quite generally.  Our main result appears as Theorem \ref{MainTheorem}, which says that after restricting ourselves to what we will call nontrivial, primitive spoof perfect factorizations, there are only finitely many odd such factorizations with a fixed number of bases.  We find all twenty-one of them that have less than seven bases. These examples not only complement the examples of Descartes and Voight, but they provide natural barriers to certain proof strategies in trying to show that OPNs cannot exist.

An outline of the paper follows.  In Section \ref{Section:Trivialities} we characterize the spoof perfect factorizations with a single base, and also those with $0$ or $-1$ as a base.  The latter factorizations are called trivial, and throughout the rest of the paper only nontrivial factorizations are considered.  In Section \ref{Section:AbundDef} we apply the classical notions of abundant and deficient numbers to spoof perfect factorizations.  Using these concepts, we characterize the spoof perfect factorizations with two bases.

Next, in Section \ref{Section:2adic} we describe two additional tools that have been used in the study of OPNs, and apply them to this broader context of spoof perfect factorizations.  With these tools in hand, we give a (possibly complete) list of spoof perfect factorizations with three bases in Section \ref{Section:3base}.  Finally, in Section \ref{Section:OddSpoofsLots}, we characterize the odd, primitive spoof perfect factorizations with six or fewer bases, as well as prove our main theorem.

Along the way we discover a rich structure inherent in the odd, spoof perfect factorizations, including the fact that there are multiple infinite (nontrivial) families of such factorizations.  These infinite families necessarily have an increasing number of bases.  We discuss one final class of examples in Section \ref{Section:EndingQuestions}, where we also pose some additional open problems.

\section{Preliminaries and triviality}\label{Section:Trivialities}

One of the primary goals of this paper is to characterize many of the spoof perfect factorizations, in the hope that this information will be useful in the study of actual perfect numbers.  The spoof perfect factorizations with one base are easy to characterize.

\begin{prop}
The only spoof perfect factorization with one base is $1^1$.
\end{prop}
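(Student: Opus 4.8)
The plan is to reduce the spoof perfect condition for a one-base factorization to a single polynomial identity and then eliminate all but one solution by elementary size estimates. Write such a factorization as $n = x^a$ with $x \in \Z$ and $a \in \Z_{\ge 1}$. Since $\tilde{\sigma}(x^a) = \sum_{j=0}^{a} x^j$, being spoof perfect means $\sum_{j=0}^{a} x^j = 2 x^a$; subtracting $x^a$ from both sides, this is equivalent to
\[
1 + x + x^2 + \cdots + x^{a-1} = x^a. \tag{$\ast$}
\]

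When $a = 1$, the left-hand side of $(\ast)$ is the single term $x^0 = 1$, so $(\ast)$ reads $x = 1$, yielding the factorization $1^1$, and indeed $\tilde{\sigma}(1^1) = 2 = 2 \cdot 1^1$. It then remains to show $(\ast)$ has no solution when $a \ge 2$. First I would dispose of $x \in \{-1, 0, 1\}$ by inspection (for $x = 0$ the left side is $1 \ne 0 = x^a$; for $x = 1$ it is $a \ge 2 \ne 1 = x^a$; for $x = -1$ it is $0$ or $1$ while $x^a = \pm 1$, and the two never agree). For $|x| \ge 2$, the triangle inequality followed by the geometric series formula gives
\[
\bigl| 1 + x + \cdots + x^{a-1} \bigr| \le 1 + |x| + \cdots + |x|^{a-1} = \frac{|x|^a - 1}{|x| - 1} \le |x|^a - 1 < |x|^a = |x^a|,
\]
where the second inequality uses $|x| - 1 \ge 1$. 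This contradicts $(\ast)$, which finishes the proof.

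The only genuine content is the displayed chain of inequalities, and even that is routine; the single point needing care is that the geometric-series bound degenerates when $|x| \le 1$, so the base values $x \in \{-1, 0, 1\}$ must be checked by hand rather than absorbed into one uniform estimate. Everything else is bookkeeping.
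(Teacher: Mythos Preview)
Your proof is correct. Both your argument and the paper's reduce to the identity $1 + x + \cdots + x^{a-1} = x^a$ and dispose of the degenerate bases $x \in \{-1,0,1\}$ by inspection, but they diverge on how to eliminate $|x| \ge 2$: you bound the left side in absolute value via the triangle inequality and the geometric-series formula, while the paper simply reads the identity modulo $x$ to get $1 \equiv 0 \pmod{x}$, an immediate contradiction. The modular observation is a one-liner and needs no separate treatment of $a=1$; your analytic estimate, on the other hand, makes no use of divisibility and would go through unchanged for real $x$ with $|x|\ge 2$. Both routes are entirely elementary.
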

\begin{proof}
By inspection, $\tilde{\sigma}(1^1)=2\cdot 1^1$.

Suppose we have a general solution $\tilde{\sigma}(x^a)=2x^a$.  We then have
\[
1+x+x^2+\cdots + x^{a-1}=x^a.
\]
Clearly $x\neq 0$.  If $x\neq \pm 1$, then looking at this equation modulo $x$ we see there are no solutions.  When $x=-1$, if $a$ is even the equation becomes $0=1$, while if $a$ is odd the equation becomes $1=-1$.  Thus, the only possible solution is when $x=1$, which implies $a=1$.
\end{proof}

Before moving on to more bases, we first handle some trivialities.

\begin{prop}
Given a factorization $\prod_{i=1}^{k}x_i^{a_i}$, we have $\tilde{\sigma}(\prod_{i=1}^{k}x_i^{a_i})=0$ if and only if at least one of the bases is $-1$ and the corresponding exponent is odd.
\end{prop}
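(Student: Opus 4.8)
The plan is to reduce the statement about the product $\tilde{\sigma}(\prod_{i=1}^k x_i^{a_i}) = \prod_{i=1}^k \bigl(\sum_{j=0}^{a_i} x_i^{j}\bigr)$ to a statement about a single factor. Since a finite product of integers is zero precisely when one of the factors is zero, we have $\tilde{\sigma}(\prod_{i=1}^k x_i^{a_i}) = 0$ if and only if $\sum_{j=0}^{a_i} x_i^{j} = 0$ for some $i$. So it suffices to prove the following single-base assertion: for $x \in \Z$ and $a \in \Z_{\geq 1}$, the sum $1 + x + \cdots + x^a$ equals $0$ if and only if $x = -1$ and $a$ is odd.

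The ``if'' direction is immediate: when $x = -1$ and $a$ is odd, the sum has $a+1$ terms, an even number, and these cancel in consecutive pairs $1 + (-1)$, leaving $0$.

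For the ``only if'' direction I would run essentially the same elementary case analysis used in the proof of the one-base proposition above. If $x = 0$, the sum is $1 \neq 0$. If $x = 1$, the sum is $a+1 \geq 2 \neq 0$. If $|x| \geq 2$, then reducing modulo $x$ gives $\sum_{j=0}^a x^j \equiv 1 \pmod{x}$, and since $x \nmid 1$ the sum is nonzero. Finally, if $x = -1$ and $a$ is even, the sum has an odd number of terms and equals $1 \neq 0$. Hence the sum can vanish only when $x = -1$ with $a$ odd, which combined with the product observation of the first paragraph gives the proposition.

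There is no real obstacle here; the argument is a short case split, and a reader could alternatively deduce the $|x| \geq 2$ and $x = 1$ cases from the identity $(x-1)(1 + x + \cdots + x^a) = x^{a+1} - 1$. The only point requiring a little care is tracking the parity of the \emph{number of terms} $a+1$, rather than of $a$ itself, in the $x = -1$ case, and noting that the analysis correctly handles the boundary case $a = 1$ (where $1 + x = 0$ forces $x = -1$).
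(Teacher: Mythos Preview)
Your proof is correct. The paper's argument is essentially the same in spirit but slightly more streamlined on the ``only if'' side: rather than splitting into cases on $|x|$, it immediately invokes the geometric sum identity $\sum_{j=0}^{a} x^j = (x^{a+1}-1)/(x-1)$ (valid for $x\neq 1$) to conclude $x^{a+1}=1$, whence $x=-1$ and $a$ is odd. You mention this identity as an alternative at the end, so you already see both routes; the only difference is which one is foregrounded. Your reduction to a single factor via ``a product of integers vanishes iff some factor does'' is also a bit more explicit than the paper's, which leaves that step implicit.
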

\begin{proof}
If $\tilde{\sigma}(x^a)=0$, then $x\neq 1$ and the geometric sum formula yields
\[
0=\sum_{j=0}^{a}x^j=\frac{x^{a+1}-1}{x-1}.
\]
Hence $x^{a+1}=1$, and thus $x=-1$ and $a$ is odd.

The converse is easy to verify.
\end{proof}

\begin{cor}\label{Cor:Trivial01}
A factorization where one of the bases equals $0$ is spoof perfect if and only if at least one other factor has base $-1$ with an odd exponent.
\end{cor}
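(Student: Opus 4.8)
The plan is to reduce this statement immediately to the preceding proposition. Write the factorization as $\prod_{i=1}^{k}x_i^{a_i}$ and, after relabeling, assume $x_1=0$. The first observation is that a base of $0$ forces $n:=\prod_{i=1}^{k}x_i^{a_i}=0$, since $0^{a_1}=0$ (here $a_1\geq 1$); hence the right-hand side of the spoof-perfect equation, $2n$, equals $0$. So the factorization is spoof perfect precisely when $\tilde{\sigma}\big(\prod_{i=1}^{k}x_i^{a_i}\big)=0$.

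Next I would isolate the contribution of the zero base to $\tilde{\sigma}$. The factor it contributes in the definition of $\tilde{\sigma}$ is $\sum_{j=0}^{a_1}0^{j}=1$, the only surviving term being $j=0$. Since $\tilde{\sigma}$ of a factorization is by definition the product of the individual geometric sums, this yields $\tilde{\sigma}\big(\prod_{i=1}^{k}x_i^{a_i}\big)=\tilde{\sigma}\big(\prod_{i=2}^{k}x_i^{a_i}\big)$. Therefore the factorization is spoof perfect if and only if $\tilde{\sigma}\big(\prod_{i=2}^{k}x_i^{a_i}\big)=0$, and the preceding proposition identifies this as happening exactly when at least one of $x_2,\dots,x_k$ equals $-1$ with an odd exponent, i.e.\ when some base other than the chosen base $0$ is $-1$ with an odd exponent. (When $k=1$ there are no other bases, the product $\prod_{i=2}^{k}$ is empty with $\tilde{\sigma}$-value $1\neq 0$, and the factorization $0^{a_1}$ is not spoof perfect, so the equivalence holds vacuously.)

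There is essentially no obstacle here; the one point deserving a moment's care is the evaluation $\sum_{j=0}^{a_1}0^{j}=1$ (the $j=0$ term contributes $0^{0}=1$), which is precisely what prevents the zero base from forcing $\tilde{\sigma}=0$ on its own. Everything else is a direct appeal to the prior result.
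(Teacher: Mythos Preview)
Your proposal is correct and follows exactly the intended route: the paper states this as an immediate corollary of the preceding proposition, and your argument---observing that a base of $0$ forces $n=0$, that the $0$-base contributes a factor of $1$ to $\tilde{\sigma}$, and then invoking the proposition on the remaining factors---is precisely the reasoning being left implicit. There is nothing to add or correct.
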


We note that if $a$ is even, then $\tilde{\sigma}((-1)^a)=1$.  Thus, if we have a spoof perfect factorization, we can obtain a new spoof perfect factorization by adjoining (or, when possible, removing) a factor whose base is $-1$ and whose exponent is even.  In light of Corollary \ref{Cor:Trivial01} and this fact, we say that a factorization is \emph{trivial} if some base is $0$ or $-1$, otherwise it is \emph{nontrivial}.  For the remainder of the paper we will implicity assume all factorizations are nontrivial.

\section{Abundant and deficient factorizations}\label{Section:AbundDef}

Recall that the function $\sigma_{-1}(n)=\sum_{d|n}\frac{1}{d}$ is, just like $\sigma$, multiplicative.  In analogy to $\tilde{\sigma}$, we define (for nontrivial factorizations)
\[
\tilde{\sigma}_{-1}\Big(\{(x_i,a_i):1\leq i\leq k\}\Big)=\tilde{\sigma}_{-1}\left(\prod_{i=1}^{k}x_i^{a_i}\right) :=\prod_{i=1}^{k}\left(\sum_{j=0}^{a_i}\frac{1}{x_i^{j}} \right).
\]
Note that a factorization $\prod_{i=1}^{k}x_i^{a_i}$ is spoof perfect if and only if $\tilde{\sigma}_{-1}(\prod_{i=1}^{k}x_i^{a_i})=2$.

There are a few benefits to working with $\tilde{\sigma}_{-1}$ instead of $\tilde{\sigma}$.  First, when $x\in \Z\setminus \{-1,0,1\}$ we may set $\tilde{\sigma}_{-1}(x^{\infty}):=\frac{x}{x-1}$ because
\[
\lim_{a\to\infty}\tilde{\sigma}_{-1}(x^a)=\sum_{j=0}^{\infty}\frac{1}{x^j}=\frac{x}{x-1},
\]
thus allowing us to consider what happens to exponents ``at infinity.''  When $x=1$ we put $\tilde{\sigma}_{-1}(1^{\infty})=\infty$, and we continue to use the formula $x/(x-1)$ by treating $1/0$ as $\infty$.  Note that if we were to allow infinite exponents, there would be another spoof perfect factorization with one base, namely $2^{\infty}$.

More generally, note that since $\tilde{\sigma}_{-1}(x^{\infty})=\tilde{\sigma}_{-1}((x-1)^1)$ when $x\in\Z\setminus\{-1,0,1\}$, we can always replace infinite exponents with finite exponents in factorizations.  Thus, in our results characterizing spoof perfect factorizations with a given number of bases we will only list those factorizations involving finite exponents, leaving the reader to figure out the possible factorizations with infinite exponents, if they so desire.

Generalizing the literature, let us say that a factorization is \emph{deficient} if $\tilde{\sigma}_{-1}$ applied to the factorization yields an output that is less than $2$, and \emph{abundant} when the output is greater than $2$.  A quick calculation shows that an abundant factorization remains abundant if we adjoin an additional factor with positive base.  More generally, we have the following growth conditions.

\begin{prop}\label{Prop:AbundDefic}
{\bf (1)} If $x\in \Z_{\geq 1}$ and $a,b\in \Z_{\geq 1}\cup \{\infty\}$ with $a<b$, then
\[
1<\frac{x+1}{x}\leq \tilde{\sigma}_{-1}(x^a)<\tilde{\sigma}_{-1}(x^b)\leq \frac{x}{x-1}.
\]
Thus, for a fixed positive base, $\tilde{\sigma}_{-1}$ is strictly increasing as the exponent increases.  Moreover, the base determines the interval $\left[\frac{x+1}{x}, \frac{x}{x-1}\right]$ in which $\tilde{\sigma}_{-1}$ takes values.

{\bf (2)} If $1\leq x<y$ are integers, and $a,b\in \Z_{\geq 1}\cup\{\infty\}$, then
\[
\tilde{\sigma}_{-1}(x^a)\geq \tilde{\sigma}_{-1}(y^b)
\]
with equality only if $y=x+1$, $a=1$, and $b=\infty$.  Thus, on positive bases the function $\tilde{\sigma}_{-1}$ decreases as bases increase.

{\bf (3)} If $x\in \Z_{<-1}$, then
\[
\frac{1}{2}\leq \frac{x+1}{x}=\tilde{\sigma}_{-1}(x^1) < \tilde{\sigma}_{-1}(x^3)<\ldots <\frac{x}{x-1} <\ldots <\tilde{\sigma}_{-1}(x^4) < \tilde{\sigma}_{-1}(x^2)<1.
\]
Thus, the values of $\tilde{\sigma}_{-1}(x^a)$ oscillate around the limiting value $\tilde{\sigma}_{-1}(x^{\infty})=\frac{x}{x-1}$ when the base is negative.

{\bf (4)} If $y<x<-1$ are integers, and $a,b\in \Z_{\geq 1}\cup \{\infty\}$, then
\[
\tilde{\sigma}_{-1}(x^a)< \tilde{\sigma}_{-1}(y^b)
\]
apart from four cases.  For $x\in \Z_{<-1}$ and $n\in \Z_{\geq 1}$, those four cases are given by
\[
\begin{array}{rclrcl}
\tilde{\sigma}_{-1}(x^{2n}) & > & \tilde{\sigma}_{-1}((x-1)^1), \qquad & \tilde{\sigma}_{-1}(x^{\infty}) & = & \tilde{\sigma}_{-1}((x-1)^1),\\
\tilde{\sigma}_{-1}((-2)^{2}) & > & \tilde{\sigma}_{-1}((-3)^{2n-1}), \qquad & \tilde{\sigma}_{-1}((-2)^2) & = & \tilde{\sigma}_{-1}((-4)^1).
\end{array}
\]
\end{prop}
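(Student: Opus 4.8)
The plan is to treat the four parts in turn, with (1) and (3)---which pin down the exact range of $\tilde{\sigma}_{-1}(x^a)$ as $a$ varies over a fixed positive, resp.\ negative, base---serving as the engine for (2) and (4).

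Part (1) is the elementary theory of the partial sums $\tilde{\sigma}_{-1}(x^a)=\sum_{j=0}^{a}x^{-j}$ of the geometric series with ratio $1/x\in(0,1]$: all terms are positive, so the sums strictly increase in $a$; the $a=1$ value is $1+\frac1x=\frac{x+1}{x}>1$; and for $x\geq2$ they converge up to $\sum_{j\geq0}x^{-j}=\frac{x}{x-1}$, while for $x=1$ they grow without bound, consistent with $\frac{x}{x-1}=\frac10=\infty$. Part (2) then falls out: by (1), $\tilde{\sigma}_{-1}(x^a)\geq1+\frac1x$ with equality iff $a=1$, and $\tilde{\sigma}_{-1}(y^b)\leq1+\frac1{y-1}$ with equality iff $b=\infty$, while $x\leq y-1$ gives $\frac1x\geq\frac1{y-1}$ with equality iff $x=y-1$; chaining these three inequalities yields both the inequality and its equality case.

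For part (3), set $r=1/x\in[-\frac12,0)$ and $S_a=\tilde{\sigma}_{-1}(x^a)=\frac{1-r^{a+1}}{1-r}$, with limit $S_\infty=\frac{1}{1-r}=\frac{x}{x-1}$. Then $S_a-S_\infty=\frac{-r^{a+1}}{1-r}$ has the opposite sign to $r^{a+1}$, which separates the odd-exponent values (below $\frac{x}{x-1}$) from the even-exponent ones (above it), and $S_{a+2}-S_a=r^{a+1}(1+r)$ with $1+r>0$ gives monotonicity within each parity class; the endpoint facts $S_1=\frac{x+1}{x}\geq\frac12$ (equality iff $x=-2$) and $S_2=1+r(1+r)<1$ are one-line checks.

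Part (4) is the substantive part, and I would split it in two. \emph{Step 1 (finding the base-pairs):} writing $x=-m$, $y=-m'$ with $2\leq m<m'$, part (3) gives $\tilde{\sigma}_{-1}(x^a)\in\left[\frac{m-1}{m},\frac{m^2-m+1}{m^2}\right]$ with the upper end attained only at $a=2$, while $\tilde{\sigma}_{-1}(y^b)\geq\frac{m'-1}{m'}$ with equality only at $b=1$; hence $\tilde{\sigma}_{-1}(x^a)\geq\tilde{\sigma}_{-1}(y^b)$ forces $\frac{m^2-m+1}{m^2}\geq\frac{m'-1}{m'}$, which rearranges to $m'\leq\frac{m^2}{m-1}=m+1+\frac1{m-1}$, so $m'=m+1$ when $m\geq3$ and $m'\in\{3,4\}$ when $m=2$. \emph{Step 2 (resolving each pair):} for $m'=m+1$ the point is that $\tilde{\sigma}_{-1}((x-1)^1)=\frac{m}{m+1}$ is exactly the limit $\tilde{\sigma}_{-1}(x^\infty)$, so running through the parities of $a$ and $b$ using part (3) yields $\tilde{\sigma}_{-1}(x^\infty)=\tilde{\sigma}_{-1}((x-1)^1)$ (the case $a=\infty$) and $\tilde{\sigma}_{-1}(x^{2n})>\tilde{\sigma}_{-1}((x-1)^1)$ (the case $a$ even, $b=1$) as the only relations when $m\geq3$; the remaining comparisons reduce to explicit inequalities such as $(m^2-m+1)(m+2)<m^2(m+1)$ and $(m^2-m+1)(m+1)^3<m^2\bigl((m+1)^3-(m+1)^2+(m+1)-1\bigr)$, which hold for $m\geq3$ but fail at $m=2$. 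At $m=2$ those borderline failures produce the extra coincidences $\tilde{\sigma}_{-1}((-2)^2)=\frac34>\tilde{\sigma}_{-1}((-3)^{2n-1})$ and $\tilde{\sigma}_{-1}((-2)^2)=\frac34=\tilde{\sigma}_{-1}((-3)^\infty)=\tilde{\sigma}_{-1}((-4)^1)$, and the pair $(m,m')=(2,4)$ contributes only the last equality. I expect this last step to be the only real obstacle---not because any single inequality is hard, but because of the bookkeeping: tracking which small $m$ are anomalous, sorting strict inequalities from equalities, and identifying the $b=\infty$ coincidence with its finite-exponent form $\tilde{\sigma}_{-1}((-4)^1)$ so that the final list of exceptions is exactly the four stated.
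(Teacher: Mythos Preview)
Your proposal is correct. The paper's own proof consists of the single sentence ``All of these are straightforward computations, left to the reader,'' so there is no approach to compare against---you have simply carried out those computations in detail, and done so carefully. In particular, your reduction in part~(4) via the inequality $m'\leq m+1+\tfrac{1}{m-1}$ is the natural way to localize the exceptional base-pairs, and your observation that the equality $\tilde{\sigma}_{-1}((-2)^2)=\tilde{\sigma}_{-1}((-3)^\infty)$ must be reconciled with the stated list of four exceptions shows you have been more attentive to edge cases than the paper's statement itself.
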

\begin{proof}
All of these are straightforward computations, left to the reader.
\end{proof}

We will now demonstrate the usefulness of these notions by characterizing the perfect factorizations with two bases.

\begin{prop}\label{Prop:TwoBase}
If $x^a\cdot  y^b$ is a nontrivial, spoof perfect factorization with $x\leq y$, then it is of the form $2^a\cdot (2^{a+1}-1)^{1}$ or it is one of the two sporadic solutions $(-2)^{1}\cdot 1^3$ and $(-3)^{1}\cdot 1^2$.
\end{prop}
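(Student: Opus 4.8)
The plan is to work with $\tilde{\sigma}_{-1}$ and use the growth estimates from Proposition \ref{Prop:AbundDefic} to trap the two bases in a short list of cases. Writing the spoof perfect condition as $\tilde{\sigma}_{-1}(x^a)\cdot\tilde{\sigma}_{-1}(y^b)=2$ with $x\le y$, the first step is to split on the signs of the bases. Since the factorization is nontrivial, no base is $0$ or $-1$. If both $x,y\ge 1$: by part (1) each factor lies in $(1,2)$, so the product lies in $(1,4)$, and we must rule out most configurations. Notice that if $x\ge 2$ then $\tilde{\sigma}_{-1}(x^a)<\tfrac{x}{x-1}\le 2$ and $\tilde{\sigma}_{-1}(y^b)<\tfrac{y}{y-1}\le\tfrac{x}{x-1}$, forcing $\tfrac{x}{x-1}\cdot\tfrac{y}{y-1}>2$; combined with $y\ge x$ this pins $x=2$ and then $y\le 3$, i.e. $y\in\{2,3\}$ — each of which is a tiny finite check. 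The remaining subcase is $x=1$: then $\tilde{\sigma}_{-1}(1^a)=a+1$ and the equation becomes $(a+1)\,\tilde{\sigma}_{-1}(y^b)=2$, i.e. $\tilde{\sigma}_{-1}(y^b)=\tfrac{2}{a+1}\le 1$, which is impossible for $y\ge 1$ by part (1); so there are no all-positive two-base solutions except possibly those already captured, and in particular we must separately recover the family $2^a\cdot(2^{a+1}-1)^1$ from the subcase $x=2$. Indeed with $x=2$ we get $\tilde{\sigma}_{-1}(2^a)=\tfrac{2^{a+1}-1}{2^a}$, so the equation forces $\tilde{\sigma}_{-1}(y^b)=\tfrac{2^{a+1}}{2^{a+1}-1}=\tfrac{y'}{y'-1}$ with $y'=2^{a+1}-1$; since $\tilde{\sigma}_{-1}(y^b)\le\tfrac{y}{y-1}$ with equality only at $b=\infty$ (part (1)) and the intervals from part (2) are disjoint except at endpoints, this forces $y=2^{a+1}-1$ and $b=\infty$, which after replacing the infinite exponent (per the discussion preceding Proposition \ref{Prop:AbundDefic}) becomes $b=1$ with the base shifted — care is needed here, and I'd instead directly solve $\tilde{\sigma}_{-1}(y^b)=\tfrac{2^{a+1}}{2^{a+1}-1}$ by noting $\tilde{\sigma}_{-1}(y^1)=\tfrac{y+1}{y}$ gives $y=2^{a+1}-1$, $b=1$.

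Next, the mixed-sign case $x<0\le y$, which really means $x\le-2$ and $y\ge 1$ (and $y=1$ is possible). By part (3), $\tilde{\sigma}_{-1}(x^a)\in[\tfrac12,1)$, and by part (1), $\tilde{\sigma}_{-1}(y^b)\in(1,2]$ for $y\ge 2$ while $\tilde{\sigma}_{-1}(1^b)=b+1\ge 2$. For the product to equal $2$ we need $\tilde{\sigma}_{-1}(y^b)=2/\tilde{\sigma}_{-1}(x^a)\in(2,4]$, which by part (1) forces $y=1$. So $x\le-2$, $y=1$, and the equation is $(b+1)\,\tilde{\sigma}_{-1}(x^a)=2$, i.e. $\tilde{\sigma}_{-1}(x^a)=\tfrac{2}{b+1}$. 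For $b=1$ this needs $\tilde{\sigma}_{-1}(x^a)=1$, impossible by part (3). For $b=2$ we need $\tilde{\sigma}_{-1}(x^a)=\tfrac23$; the reachable values with $x\le-2$ all lie in $[\tfrac12,1)$ and are strictly monotone in the two subsequences (odd/even exponents) by part (3), so one checks small cases: $\tilde{\sigma}_{-1}((-3)^1)=\tfrac23$ works, giving $(-3)^1\cdot 1^2$; and one must argue no other $(x,a)$ hits $\tfrac23$, which follows because for $x\le-3$ odd exponents give values $\le\tilde{\sigma}_{-1}((-3)^\infty)=\tfrac34$ approached from below with minimum $\tfrac{x+1}{x}<\tfrac23$ only when... — this is the one spot needing a genuine (if short) inequality argument, bounding $\tfrac{x+1}{x}$ and the even-exponent values away from $\tfrac23$ for $x\le-4$, and handling $x=-2$ directly ($\tilde{\sigma}_{-1}((-2)^a)\in\{\tfrac12,\tfrac34,\tfrac58,\dots\}$, never $\tfrac23$). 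For $b=3$ we need $\tilde{\sigma}_{-1}(x^a)=\tfrac12$, which by part (3) happens exactly at $x=-2$, $a=1$, giving $(-2)^1\cdot 1^3$. For $b\ge 4$ we need $\tilde{\sigma}_{-1}(x^a)=\tfrac{2}{b+1}\le\tfrac25<\tfrac12$, impossible by part (3).

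Finally the case $x\le y\le-2$, both bases negative. By part (3) each factor lies in $[\tfrac12,1)$, so the product lies in $[\tfrac14,1)$, which never equals $2$ — so there are no solutions here at all, dispatching this case in one line. Collecting the surviving possibilities from all cases yields exactly the family $2^a\cdot(2^{a+1}-1)^1$ together with the two sporadic solutions $(-2)^1\cdot 1^3$ and $(-3)^1\cdot 1^2$.

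The main obstacle is the bookkeeping in the mixed-sign case with $y=1$, $b=2$: ruling out that any negative base other than $-3$ (with exponent $1$) can make $\tilde{\sigma}_{-1}(x^a)=\tfrac23$. The clean way to do this is to invoke part (4) (or rather its underlying monotonicity in $|x|$) to reduce to finitely many small $|x|$ and then to use the oscillation structure of part (3) — the odd-exponent subsequence increases to $\tfrac{x}{x-1}$ and the even-exponent subsequence decreases to it, so for each fixed $x$ only the first one or two exponents can possibly land on $\tfrac23$, and those are finitely many explicit checks. Everything else is immediate from the sign analysis plus the interval constraints in Proposition \ref{Prop:AbundDefic}, and the positive-base family drops out of the $x=2$ subcase as shown.
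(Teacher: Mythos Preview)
Your overall strategy---splitting on signs and using the interval bounds from Proposition~\ref{Prop:AbundDefic}---matches the paper's, and your treatment of the both-negative and mixed-sign cases is essentially correct (if a bit verbose for $b=2$; the paper dispatches $b\in\{2,3\}$ in one line via part~(4)).

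There is, however, a genuine gap in the both-positive case. Your inequality argument for $x\ge 2$ does \emph{not} pin $x=2$: from $\frac{x}{x-1}\cdot\frac{y}{y-1}>2$ and $y\ge x$ you only get $\bigl(\frac{x}{x-1}\bigr)^2>2$, i.e.\ $x<2+\sqrt{2}$, so $x\in\{2,3\}$. The case $x=y=3$ survives the bound (since $\tfrac32\cdot\tfrac32=\tfrac94>2$), and you never address it. This case is not vacuous: for instance $\tilde\sigma_{-1}(3^2\cdot 3^2)=169/81>2$ while $\tilde\sigma_{-1}(3^1\cdot 3^b)<2$ for every finite $b$, so abundance/deficiency alone does not kill it cleanly. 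The paper handles $x=y=3$ by a $2$-adic observation (if $a=1$ or $b=1$ then $v_2(\tilde\sigma)\ge 2$, so not spoof perfect) followed by abundance for $a,b\ge 2$. You need some such argument.

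A smaller issue: in the $x=2$ subcase your uniqueness argument for $(y,b)$ is muddled. You write $\frac{2^{a+1}}{2^{a+1}-1}=\frac{y'}{y'-1}$ with $y'=2^{a+1}-1$ (it should be $y'=2^{a+1}$), then oscillate between $b=\infty$ and $b=1$, and your final sentence only exhibits one solution without excluding others. The clean route is the paper's: write $\tilde\sigma_{-1}(y^b)=\tilde\sigma_{-1}((2^{a+1}-1)^1)$, use part~(2) to force $y=2^{a+1}-1$, then part~(1) to force $b=1$.
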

\begin{proof}
If $x$ and $y$ are both negative then by part (3) of Proposition \ref{Prop:AbundDefic}, $\tilde{\sigma}_{-1}(x^a\cdot y^b)<1$, so the factorization isn't spoof perfect.

Next consider the case when $x$ and $y$ are both positive.  If $x=1$, then part (1) of Proposition \ref{Prop:AbundDefic} yields
\[
\tilde{\sigma}_{-1}(x^a\cdot y^b)>\tilde{\sigma}_{-1}(1^1)=2
\]
so the factorization is abundant.  If $x=2$, then
\[
\tilde{\sigma}_{-1}(y^b)=\frac{2}{\tilde{\sigma}_{-1}(x^a)}=\tilde{\sigma}_{-1}((2^{a+1}-1)^1).
\]
By part (2) of Proposition \ref{Prop:AbundDefic} we must have $y=2^{a+1}-1$, and then by part (1) we get $b=1$.  This gives us the infinite family of solutions given in the statement of the proposition.

Next, if $x\geq 3$ and $y\geq 4$, then we compute
\[
\tilde{\sigma}_{-1}(x^a\cdot y^b)<\tilde{\sigma}_{-1}(3^\infty\cdot 4^{\infty})=\frac{3}{2}\cdot\frac{4}{3}=2,
\]
so the factorization is deficient.  Thus the only other possible case with both $x$ and $y$ positive is $x=y=3$.  If $a=1$ or $b=1$, then the factorization is not perfect due to $2$-adic considerations.  (We look at general $p$-adic restrictions in the next section.)  Thus $a,b\geq 2$.  We then compute
\[
\tilde{\sigma}_{-1}(x^a\cdot y^b)\geq \tilde{\sigma}_{-1}(3^2\cdot 3^2)=\frac{169}{81}>2,
\]
so the factorization is abundant.

Finally, consider the case when $x\leq -2$ and $y$ is positive.  By part (3) of Proposition \ref{Prop:AbundDefic}, $\tilde{\sigma}_{-1}(x^a)<1$.  If $y\geq 2$, then $\tilde{\sigma}_{-1}(y^b)\leq 2$ and so $\tilde{\sigma}_{-1}(x^a\cdot y^b)<2$, hence the factorization is deficient.  So we reduce to the case when $y=1$.  If $b=1$, the factorization is deficient.  If $b\geq 4$, then we find
\[
\tilde{\sigma}_{-1}(x^a\cdot y^b)\geq \frac{1}{2}\tilde{\sigma}_{-1}(1^4)=\frac{5}{2}>2,
\]
so the factorization is abundant.  It is straightforward to check, using part (4) of Proposition \ref{Prop:AbundDefic}, that the remaining two cases give the sporadic solutions.
\end{proof}

\section{\texorpdfstring{$2$}{2}-adic valuation considerations}\label{Section:2adic}

Given that we want to solve $\tilde{\sigma}_{-1}(\prod_{i=1}^{k}x_i^{a_i})=2$, it is natural to look at the $2$-adic conditions necessary for this equality to hold.  These conditions are especially nice in the case where all the bases are odd.

\begin{prop}\label{Proposition:Eulerian}
In any odd, spoof perfect factorization, exactly one of the exponents in the factorization is odd.  If $x$ is the unique base with an odd exponent $a$, in such a factorization, then
\[
x\equiv a\equiv 1\pmod{4}.
\]
\end{prop}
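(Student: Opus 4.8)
The plan is to argue entirely $2$-adically. Write $\sigma_i:=\sum_{j=0}^{a_i}x_i^{j}$, so that $\tilde{\sigma}\big(\prod_{i=1}^k x_i^{a_i}\big)=\prod_{i=1}^k\sigma_i$, and each $\sigma_i$ is nonzero since by the second Proposition of Section~\ref{Section:Trivialities} a factor vanishes only when its base is $-1$ with an odd exponent, which nontriviality forbids. Because the factorization is spoof perfect and odd, $\prod_i\sigma_i=2\prod_i x_i^{a_i}=2n$ with $n$ odd, hence
\[
\sum_{i=1}^k v_2(\sigma_i)=v_2(2n)=1 .
\]
Now every base $x_i$ is odd, so $\sigma_i$ is a sum of $a_i+1$ odd integers; a sum of $m$ odd integers is even exactly when $m$ is even, so $\sigma_i$ is even precisely when $a_i$ is odd. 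Each index with $a_i$ odd therefore contributes at least $1$ to the sum above, and since the sum is positive there is at least one such index; thus there is exactly one, which is the first assertion.

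For the congruences, let $x$ be the unique base with odd exponent $a$, so $v_2(\sigma)=1$ for $\sigma=1+x+\cdots+x^{a}$. Since $a$ is odd, I would group the $a+1$ terms into consecutive pairs to obtain
\[
\sigma=(1+x)\bigl(1+x^{2}+x^{4}+\cdots+x^{a-1}\bigr),
\]
where the second factor is a sum of $(a+1)/2$ odd integers (it is the single term $1$ when $a=1$, e.g.\ Descartes's $22021^{1}$, and the claim below holds trivially there). As $x$ is odd, $v_2(1+x)\geq 1$, with $v_2(1+x)\geq 2$ exactly when $x\equiv 3\pmod 4$; that case would force $v_2(\sigma)\geq 2$, contradicting $v_2(\sigma)=1$. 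Hence $x\equiv 1\pmod 4$ and $v_2(1+x)=1$. It follows that the second factor must be odd, i.e.\ $(a+1)/2$ is odd, which is precisely $a\equiv 1\pmod 4$.

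I do not anticipate a real obstacle: the whole argument is a short $2$-adic computation. The only points needing slight care are that additivity of $v_2$ over products and the parity count for sums of odd integers be applied to bases that may be negative (both hold verbatim over $\Z$), that nontriviality is invoked to guarantee no $\sigma_i$ vanishes so the valuations are defined, and the degenerate case $a=1$ in the displayed factorization of $\sigma$.
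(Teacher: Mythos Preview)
Your proof is correct and follows essentially the same $2$-adic approach as the paper: both compute that the total $2$-adic valuation of $\tilde{\sigma}$ (equivalently $\tilde{\sigma}_{-1}$) must equal $1$, deduce that exactly one exponent is odd, and then pin down the mod-$4$ congruences. Where the paper simply says ``a direct computation through the cases mod $4$,'' you make this explicit via the factorization $\sigma=(1+x)\sum_{j=0}^{(a-1)/2}x^{2j}$, which is a clean way to carry out that computation.
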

\begin{proof}
Let $v_2$ denote the $2$-adic valuation on $\Q^{\times}$. If $x$ is any odd base, then $v_2(\tilde{\sigma}_{-1}(x^a))\geq 0$.  The valuation is strictly positive if and only if $a$ is odd.  As we want the $2$-adic valuation on the entire factorization to equal $1$, we see that exactly one base has an odd exponent. A direct computation through the cases mod $4$ shows that $v_2(\tilde{\sigma}_{-1}(x^a))=1$ if and only if $x\equiv a\equiv 1\pmod{4}$.
\end{proof}

This proposition restricts the class of potential spoof perfect factorizations, as it rules out many possible exponents on bases.  Euler was the first to prove and use this restriction, in the case of actual OPNs.  Thus, in any odd, spoof perfect factorization we will call the unique base with an odd exponent the \emph{Eulerian base}.

There is another result in the literature that applies to spoof perfect factorizations as well as actual perfect numbers; however, it applies only for positive, odd factorizations.

\begin{prop}\label{Prop:PaceBound}
If $n=\prod_{i=1}^{k}x_i^{a_i}$ is any positive, odd, spoof perfect factorization, then $n<2^{2^{2k}}$.
\end{prop}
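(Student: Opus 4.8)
The plan is to adapt the classical argument bounding odd perfect numbers (Nielsen's theorem that an odd perfect number with $k$ distinct prime divisors is smaller than $2^{4^k}$) and to check that it survives in the broader spoof setting. First rewrite the hypothesis in terms of $\tilde{\sigma}_{-1}$: we must show that $\prod_{i=1}^{k}\tilde{\sigma}_{-1}(x_i^{a_i})=2$ forces $n=\prod_{i=1}^{k}x_i^{a_i}<2^{2^{2k}}$. The one-base case is immediate (the only such factorization is $1^1$), so assume $k\ge 2$. Since the factorization is positive, odd, and nontrivial, every base is an odd integer $\ge 3$: a base equal to $1$ would contribute a factor $\tilde{\sigma}_{-1}(1^{a})=a+1\ge 2$, while the remaining factors each exceed $1$ by part (1) of Proposition \ref{Prop:AbundDefic}, so the product would exceed $2$. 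Relabel so that $3\le x_1\le x_2\le\cdots\le x_k$.

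\emph{Step 1: bounding the bases.} Put $t_j=\prod_{i=1}^{j}\tilde{\sigma}_{-1}(x_i^{a_i})$, so $t_0=1$, $t_k=2$, and $1<t_j<2$ for $0<j<k$ because the omitted factors each exceed $1$. Fix $0\le j<k$; splitting off the $(j+1)$st factor and estimating the $k-j$ factors of larger index by $x_{j+1}/(x_{j+1}-1)$ (part (1) of Proposition \ref{Prop:AbundDefic}, using the ordering) gives
\[
2=t_j\prod_{i=j+1}^{k}\tilde{\sigma}_{-1}(x_i^{a_i})<t_j\left(\frac{x_{j+1}}{x_{j+1}-1}\right)^{k-j},
\qquad\text{hence}\qquad
\left(1+\tfrac{1}{x_{j+1}-1}\right)^{k-j}>\frac{2}{t_j}.
\]
Since $t_j<2$ the right side exceeds $1$, so taking logarithms (and using $e^{s}\ge 1+s$) yields $x_{j+1}<1+(k-j)/\ln(2/t_j)$. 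One then iterates: from $t_{j+1}<t_j\cdot x_{j+1}/(x_{j+1}-1)$ one gets $\ln(2/t_{j+1})>\ln(2/t_j)-\ln\!\bigl(1+\tfrac{1}{x_{j+1}-1}\bigr)$, and by induction on $j$ one controls how fast $\ln(2/t_j)$ can shrink, concluding that $x_j$ is at most a tower doubly exponential in $j$; in particular every base is smaller than $2^{2^{O(k)}}$.

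\emph{Step 2: bounding the exponents and concluding.} With the bases now bounded, set $D=\prod_{i=1}^{k}x_i/(x_i-1)$; this is explicitly bounded and $D>2$ since $2=\prod\tilde{\sigma}_{-1}(x_i^{a_i})<D$. Using $\tilde{\sigma}_{-1}(x_i^{a_i})=\frac{x_i}{x_i-1}\bigl(1-x_i^{-(a_i+1)}\bigr)$, the spoof perfect equation becomes $\prod_{i=1}^{k}\bigl(1-x_i^{-(a_i+1)}\bigr)=2/D$, whence $\sum_{i}x_i^{-(a_i+1)}\ge 1-2/D>0$. Since the bases are bounded, $D$ (and hence $D-2$) ranges over finitely many values, so $D-2$ is bounded below explicitly; thus at least one factor has its exponent as well as its base bounded explicitly. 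Peeling that factor off replaces the equation by one with $k-1$ factors whose product is a rational strictly between $1$ and $2$ (with $D'>r'$ still holding), and the same estimate applies. Induction on $k$ bounds every $a_i$, hence every $x_i^{a_i}$; multiplying these bounds and keeping careful track of the constants produces the exponent $2k$ and the stated inequality $n<2^{2^{2k}}$.

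The main obstacle is Step 1: a priori one knows only $t_j<2$, and one must preclude $t_j$ climbing toward $2$ faster than the induction can absorb. For honest odd perfect numbers the primes are distinct, which keeps the partial products $\prod_{i\le j}p_i/(p_i-1)$ under control; in the present setting the bases may repeat or be composite, and the natural substitute is Proposition \ref{Proposition:Eulerian}: all but one of the exponents is even, hence $\ge 2$, and the unique odd exponent sits on a base $\equiv 1\pmod 4$, so a fixed base $x$ can occur only boundedly many times (for instance $3$ occurs at most once, since $\tilde{\sigma}_{-1}(3^2)^2>2$), which prevents $\prod_{i\le j}x_i/(x_i-1)$ from reaching $2$ prematurely. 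Carrying this bookkeeping through, and confirming that the argument never secretly used primality of the bases, is where the real work lies.
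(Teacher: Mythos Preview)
The paper's own proof is a single sentence---``Follow the argument in \cite{NielsenBestBound}, but now for spoof perfect factorizations''---so you are really being compared against a citation. Your two-step outline (bound the bases via an iterated deficiency argument, then bound the exponents) does match the overall shape of Nielsen's method, and your preliminary observation that no base can equal $1$ once $k\ge 2$ is correct and needed.

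Where your proposal goes astray is in your closing paragraph. You flag repeated or composite bases as ``the main obstacle'' and suggest patching this with Proposition~\ref{Proposition:Eulerian} to bound how often a given base can recur. This is a red herring. Nielsen's argument in \cite{NielsenBestBound} is already phrased at the level of the Diophantine equation
\[
\frac{a}{b}=\prod_{i=1}^{k}\frac{x_i^{a_i+1}-1}{x_i^{a_i}(x_i-1)}
\]
with integer $x_i\ge 2$, and nowhere uses that the $x_i$ are prime or pairwise distinct; the parameter $k$ in the bound is simply the number of factors in the product, which is precisely the number of bases here. So the adaptation really is immediate, and the Eulerian condition is not the mechanism that rescues it.

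The genuine gap is elsewhere: your Step~2 is too coarse to deliver the specific constant $2^{2^{2k}}$. The ``peel off one bounded factor and induct'' scheme you describe is closer to the older Heath-Brown/Cook approach, which gives a weaker bound. The actual content of \cite{NielsenBestBound} is a sharper recursion that bounds $\prod_i x_i^{a_i+1}$ directly via an inequality of the type $\prod(X_i-1)\le(\prod X_i)-1$, iterated with care; your sketch (``keeping careful track of the constants produces the exponent $2k$'') does not reproduce this, and without it you would not arrive at $2^{4^k}$ but at something looser. In short: right strategy, wrong diagnosis of the hard part.
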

\begin{proof}[Proof sketch.]
Follow the argument in \cite{NielsenBestBound}, but now for spoof perfect factorizations.
\end{proof}

\begin{cor}
There are finitely many positive, odd, spoof perfect factorizations with a fixed number of bases.
\end{cor}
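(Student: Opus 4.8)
The plan is to deduce this directly from Proposition~\ref{Prop:PaceBound}, which bounds $n$ in terms of the number of bases. Fix $k$ and put $N := 2^{2^{2k}}$. By Proposition~\ref{Prop:PaceBound}, every positive, odd, spoof perfect factorization $n=\prod_{i=1}^{k}x_i^{a_i}$ with exactly $k$ bases satisfies $n<N$, so it suffices to show that only finitely many such factorizations have $n$ below a fixed bound \emph{and} exactly $k$ bases.

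First I would bound the bases and the exponents attached to bases that exceed $1$. Since the factorization is positive, every $x_j^{a_j}\geq 1$, so for each $i$ we have $x_i^{a_i}\leq\prod_{j=1}^{k}x_j^{a_j}=n<N$. Hence $x_i<N$, and whenever $x_i\geq 2$ we also get $a_i\leq\log_2 N=2^{2k}$. Thus each pair $(x_i,a_i)$ with $x_i\geq 2$ lies in a fixed finite set.

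The one point that genuinely needs care is that a bound on $n$ alone does \emph{not} bound the number of $k$-base factorizations of $n$: adjoining a factor $1^{a}$ leaves $n$ unchanged for every $a$ (for instance, $1^{a}\cdot 6^{1}=6$ for all $a$). This is precisely where the spoof perfect equation enters. If $x_i=1$, then the corresponding factor of $\tilde{\sigma}_{-1}(\prod_{j}x_j^{a_j})$ is $\tilde{\sigma}_{-1}(1^{a_i})=a_i+1\geq 2$; by part~(1) of Proposition~\ref{Prop:AbundDefic} every factor coming from a positive base $\geq 2$ is $>1$, and any factor from a second base equal to $1$ is again $\geq 2$. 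Since the product of all these factors equals $2$, we must have $a_i+1\leq 2$, i.e.\ $a_i=1$ (and in fact at most one base can equal $1$, with none when $k\geq 2$). So the pairs $(x_i,a_i)$ with $x_i=1$ also lie in a finite set.

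Combining the two cases, each of the $k$ pairs $(x_i,a_i)$ ranges over a fixed finite set, so there are only finitely many ordered $k$-tuples of such pairs, hence only finitely many factorizations with $k$ bases. There is no serious obstacle here beyond remembering to invoke the spoof perfect condition to rule out arbitrarily large exponents on a base equal to $1$; the real content of the statement is carried entirely by Proposition~\ref{Prop:PaceBound}.
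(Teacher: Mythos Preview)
Your argument is correct. The paper states this corollary with no proof at all, treating it as immediate from Proposition~\ref{Prop:PaceBound}; you have simply written out the details that the paper leaves implicit, including the one genuinely nontrivial point (that the bound on $n$ alone does not control exponents on a base equal to $1$, and one must invoke the spoof perfect equation to rule those out).
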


While the previous proposition gives an effective bound on the size of any positive, odd, spoof perfect factorization in terms of the number of factors, it is often possible to further restrict cases by applying abundance and deficiency arguments.  Using such ideas, Dittmer \cite{Dittmer} showed that other than the spoof perfect factorization $1^1$ and Descartes's example, there are no other positive, odd, spoof perfect factorizations with fewer than seven bases.  In the remainder of this paper, we explore what happens if we weaken the positivity and parity conditions.  Surprisingly, many (but not all) of the finiteness conditions disappear, and a much richer structural pattern emerges.

\section{Three bases---more infinite families and complicated behavior}\label{Section:3base}

If we remove the restriction that bases are odd, there are many more spoof perfect factorizations.  We list below some spoof perfect factorizations having exactly three bases. (Exactly one of the given factorizations is odd.)  We found fifteen sporadic solutions.
\[
\begin{array}{lll}
\bullet\  (-10)^1\cdot (-3)^3\cdot 1^2\qquad & \bullet\  (-5)^1\cdot (-2)^3\cdot 1^3\qquad  & \bullet\  (-2)^2\cdot 1^1\cdot 3^1\\
\bullet\  (-9)^1\cdot (-4)^1\cdot 1^2 & \bullet\  (-4)^1\cdot (-3)^1\cdot 1^3 & \bullet\  3^1\cdot 4^1\cdot 5^1\\
\bullet\  (-7)^1\cdot (-3)^2\cdot 1^2 & \bullet\  (-3)^1\cdot (-2)^1\cdot 1^5 & \bullet\  3^1\cdot 4^2\cdot 7^1\\
\bullet\  (-6)^1\cdot (-5)^1\cdot 1^2 & \bullet\  (-3)^1\cdot (-2)^2\cdot 1^3 & \bullet\  3^3\cdot 4^2\cdot 35^1\\
\bullet\  (-5)^1\cdot (-2)^1\cdot 1^4 & \bullet\  (-2)^1\cdot (-2)^1\cdot 1^7 & \bullet\  3^3\cdot 5^1\cdot 8^1
\end{array}
\]
Letting $n\in \Z_{>0}$ serve as an index, there are also six infinite families.
\[
\begin{array}{ll}
\bullet\ (-2^{2n+2}+2^{n+2}-2)^1\cdot 2^n\cdot (2^{n+1}-1)^3 \qquad  &  \bullet\  (-2)^{2n-1}\cdot 1^2\cdot (2^{2n}-1)^1\\
\bullet\ (-2^{2n+2}+2^{n+1}-1)^1\cdot 2^n\cdot (2^{n+1}-1)^2  & \bullet\  (-n-1)^1\cdot 1^1\cdot n^1\\
\bullet\  (-2^{2n+1}-1)^1\cdot(-2)^{2n}\cdot 1^2  & \bullet\  3^1\cdot 3^n\cdot (3^{n+1}-1)^1
\end{array}
\]
Further, letting $m\in \Z_{>0}$ serve as a second index, there are three doubly-indexed infinite families.
\[
\begin{array}{l}
\bullet\ 2^n\cdot (2^{n+1})^{m}\cdot (2^{(n+1)(m+1)}-1)^1\\
\bullet\ \left(-(2^{n+1}-1)\left(\frac{2^{n+1}}{m}-1\right)\right)^1\cdot 2^n\cdot (2^{n+1}-1-m)^1, \text{ where }\, m|(2^{2n+2}-2^{n+1})\, \text{ and }\,m<2^{n+1}-2\\
\bullet\ 2^n\cdot(2^{n+1}+m)^1\cdot \left((2^{n+1}-1)\left(\frac{2^{n+1}}{m+1}+1\right)\right)^1,\text{ where }\, m|(2^{2n+2}-2^{n+1})\,\text{ and }\,m\leq 2^{n+1}-2
\end{array}
\]

These examples can be found employing abundance and deficiency computations and a case-by-case analysis.  Many cases can be eliminated by considering $p$-adic information, applying the following two useful results. In the following, $\Phi_n$ denotes the $n$th cyclotomic polynomial, $v_q(x)$ denotes the $q$-adic valuation of $x$ for an arbitrary prime $q$, and $o_q(x)$ denotes the order of $x$ modulo $q$.

\begin{prop}[{\cite[Theorems 94 and 95]{Nagell}}]
Let $q$ be a prime and $n\geq 1$ an integer.  Setting $k:=v_q(n)$, write $n=q^{k}m$.  The equation
\[
\Phi_n(x)\equiv 0\pmod q
\]
is solvable for some $x\in \Z$ if and only if $q\equiv 1 \pmod{m}$, and the solutions are exactly those integers $x$ with $o_q(x)=m$.  Moreover, if $x$ is such a solution, then
\[
v_q(\Phi_n(x))=\begin{cases}
v_q(x^n-1) & \text{ if }k=0,\\
1 & \text{ if } k\geq 2, \text{ or } k=1 \text{ and } n>2,\\
v_2(x+1) & \text{ if }k=1 \text{ and }n=2 \text{ \textup{(}so $q=2$\textup{)}.}
\end{cases}
\]
\end{prop}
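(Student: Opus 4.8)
The plan is to deduce everything from three ingredients: the factorization $x^{n}-1=\prod_{d\mid n}\Phi_d(x)$; the elementary cyclotomic identities
\[
\Phi_{qN}(x)=\Phi_N(x^{q})\ \text{ if }q\mid N,\qquad \Phi_{qN}(x)=\Phi_N(x^{q})/\Phi_N(x)\ \text{ if }q\nmid N;
\]
and the Lifting the Exponent Lemma, which for an odd prime $q$ with $q\mid a-1$, $q\nmid a$ gives $v_q(a^{\ell}-1)=v_q(a-1)+v_q(\ell)$, and for $q=2$ with $a$ odd and $\ell$ even gives $v_2(a^{\ell}-1)=v_2(a-1)+v_2(a+1)+v_2(\ell)-1$. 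First I would dispose of the case $q\mid x$: then $\Phi_n(x)\equiv\Phi_n(0)=\pm1\pmod q$, so $q\nmid\Phi_n(x)$; moreover such an $x$ has no order modulo $q$, so it is (vacuously) excluded from the asserted solution set, and since ``solvable'' only requires the existence of one root we may assume from now on that $q\nmid x$. Write $d:=o_q(x)$, $k:=v_q(n)$, $n=q^{k}m$ with $q\nmid m$; note $d\mid q-1$, so $q\nmid d$.

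For the solvability criterion and the description of the solutions, the key point is that any solution must satisfy $d=m$ exactly, not merely $d\mid m$. If $q$ divided two of the factors $\Phi_e(x)$, $\Phi_{e'}(x)$ with $e\ne e'$ both dividing $n$, then $x^{n}-1$ would have a repeated root modulo $q$, forcing $q\mid n$ (since $\gcd(x^{n}-1,nx^{n-1})\ne1$ in $\F_q[x]$ while the common root $\bar x\ne0$). From this and the trivial implication $q\mid\Phi_e(x)\Rightarrow o_q(x)\mid e$ one gets: for $e\mid m$ (so $q\nmid e$), $q\mid\Phi_e(x)\iff o_q(x)=e$. Now suppose $q\mid\Phi_n(x)$. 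If $k\ge1$, a descent using the two identities and Fermat's little theorem in the form $\Phi_N(x^{q})\equiv\Phi_N(x)\pmod q$ passes from $q\mid\Phi_{q^{j}m}(x)$ to $q\mid\Phi_{q^{j-1}m}(x)$, so $q\mid\Phi_m(x)$; if $k=0$ then $n=m$ and this is immediate. Either way $q\mid\Phi_m(x)$, hence $d=m$, hence $m\mid q-1$, i.e.\ $q\equiv1\pmod m$. Conversely, when $q\equiv1\pmod m$ the cyclic group $(\Z/q\Z)^{\times}$ has an element of order $m$, and the valuation computation below shows any such $x$ has $q\mid\Phi_n(x)$; this completes the solvability statement and the identification of the solutions with the integers of order $m$ modulo $q$.

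For the valuation formula, fix $x$ with $o_q(x)=m$. For $e\mid q^{k}m$, the condition $q\mid\Phi_e(x)$ forces $m=o_q(x)\mid e$ and, writing $e=q^{j}m'$ with $q\nmid m'$, forces $m\mid m'\mid m$; so the only possible such $e$ lie in $\{m,qm,\dots,q^{k}m\}$. Summing the factorization of $x^{q^{j}m}-1$ therefore gives $v_q(x^{q^{j}m}-1)=\sum_{i=0}^{j}v_q(\Phi_{q^{i}m}(x))$ for each $0\le j\le k$ (the omitted factors contributing $0$). If $k=0$ this reads $v_q(\Phi_n(x))=v_q(x^{n}-1)$, the first case. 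If $k\ge1$ and $q$ is odd, LTE (applied to $a=x^{m}$, using $q\mid x^{m}-1$) gives $v_q(x^{q^{j}m}-1)=v_q(x^{m}-1)+j$, and differencing the identities for $j$ and $j-1$ yields $v_q(\Phi_{q^{j}m}(x))=1$ for every $j\ge1$; in particular $v_q(\Phi_n(x))=1$, which covers both $k\ge2$ and $k=1,\ n>2$ in the odd case. Finally, if $q=2$ then $m=o_2(x)=1$ and $n=2^{k}$, so the case $k=1,\ n>2$ cannot arise with $q=2$; here $\Phi_2(x)=x+1$ gives $v_2(\Phi_n(x))=v_2(x+1)$ when $k=1$, and for $k\ge2$ we have $\Phi_{2^{k}}(x)=x^{2^{k-1}}+1$ with $x^{2}\equiv1\pmod 8$, so $v_2(\Phi_n(x))=1$ (equivalently, telescope the $q=2$ form $v_2(x^{2^{j}}-1)=v_2(x-1)+v_2(x+1)+j-1$).

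The main obstacle is the middle step: proving that a solution has $o_q(x)=m$ exactly, and then pinning down the precise set of $e\mid n$ with $q\mid\Phi_e(x)$, because the telescoping sum in the valuation step is only valid once that set is known to be exactly $\{m,qm,\dots,q^{k}m\}$. The repeated-root observation together with the two cyclotomic identities and the Fermat reduction handles this. The lone genuine exception is $q=2$: the valuation of $\Phi_2$ is $v_2(x+1)$ rather than $1$, which is precisely where the $q=2$ version of LTE (with its extra $v_2(x+1)$ term) parts ways with the odd-prime version, so this subcase must be split off.
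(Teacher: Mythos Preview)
The paper does not give its own proof of this proposition; it simply cites Nagell's textbook (Theorems~94 and~95) and moves directly to the corollary. So there is no argument in the paper to compare against.

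Your proof is correct and self-contained. The three tools you invoke---the factorization $x^{n}-1=\prod_{d\mid n}\Phi_d(x)$, the identities $\Phi_{qN}(x)=\Phi_N(x^{q})$ (for $q\mid N$) and $\Phi_{qN}(x)\Phi_N(x)=\Phi_N(x^{q})$ (for $q\nmid N$), and LTE---do all the work cleanly. The one place to be careful is the descent step at $j=1$: from $\Phi_{qm}(x)\Phi_m(x)\equiv\Phi_m(x)\pmod q$ you need the extra line that $q\mid\Phi_{qm}(x)$ forces $\Phi_{qm}(x)-1\equiv-1\not\equiv0$, hence $q\mid\Phi_m(x)$; you have this implicitly but it would not hurt to say it. Likewise, your telescoping identity $v_q(x^{q^{j}m}-1)=\sum_{i=0}^{j}v_q(\Phi_{q^{i}m}(x))$ holds unconditionally once you know the vanishing of the other factors, so the differencing argument establishes $v_q(\Phi_{q^{j}m}(x))=1$ for $j\ge1$ (odd $q$) even before you know those terms are positive---which is what makes the ``conversely'' direction go through. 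Your handling of $q=2$ is correct: since $m=o_2(x)=1$, the case $k=1$, $n>2$ genuinely cannot occur, and the direct computation $x^{2}\equiv1\pmod 8$ gives $v_2(x^{2^{k-1}}+1)=1$ for $k\ge2$.

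In short: the proposal is a complete proof, essentially the standard modern argument (LTE replacing Nagell's more classical treatment), and there is nothing in the paper to set it against.
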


\begin{cor}\label{Cor:CyclotomicAdic}
If $q\geq 3$ is a prime, $x\in \Z\setminus \{0\}$, and $a\in \Z_{>0}$, then
\[
v_q(\tilde{\sigma}_{-1}(x^a)) =
\begin{cases}
-a v_q(x) & \text{ if }q|x,\\
v_q(x^{o_q(x)}-1)+v_q(a+1) & \text{ if }o_q(x)|(a+1) \text{ and }o_q(x)\neq 1,\\
v_q(a+1) & \text{ if }o_q(x)=1,\\
0 & \text{ otherwise}.
\end{cases}
\]
A similar statement holds when $q=2$.
\end{cor}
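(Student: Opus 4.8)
The plan is to reduce the computation to the cyclotomic factorization of $x^{a+1}-1$ and then apply the preceding theorem to each cyclotomic factor separately. Assume first $x\neq 1$; then
\[
\tilde{\sigma}_{-1}(x^a)=\sum_{j=0}^{a}\frac{1}{x^{j}}=\frac{x^{a+1}-1}{x^{a}(x-1)},
\]
so $v_q(\tilde{\sigma}_{-1}(x^a))=v_q(x^{a+1}-1)-v_q(x-1)-a\,v_q(x)$. The case $x=1$ is immediate, since $\tilde{\sigma}_{-1}(1^a)=a+1$ and $o_q(1)=1$. If $q\mid x$, then $x^{a+1}\equiv 0$ and so $x^{a+1}-1\equiv -1\equiv x-1\pmod q$; hence the first two valuations vanish and $v_q(\tilde{\sigma}_{-1}(x^a))=-a\,v_q(x)$, which is the first case.

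Now suppose $q\nmid x$, so $v_q(x)=0$ and $e:=o_q(x)$ is defined; note $e\mid q-1$, whence $\gcd(e,q)=1$. Writing $x^{a+1}-1=\prod_{d\mid(a+1)}\Phi_d(x)$ and $x-1=\Phi_1(x)$, we obtain
\[
v_q(\tilde{\sigma}_{-1}(x^a))=\sum_{\substack{d\mid(a+1)\\ d>1}}v_q(\Phi_d(x)).
\]
By the theorem above, with $d=q^{k}m$ and $\gcd(m,q)=1$, the term $v_q(\Phi_d(x))$ is nonzero precisely when $q\equiv 1\pmod m$ and $o_q(x)=m$; that is, precisely when $m=e$ and $d=e\,q^{k}$. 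Since $\gcd(e,q)=1$, such a $d$ divides $a+1$ exactly when $e\mid(a+1)$ and $0\le k\le v_q(a+1)$, and in that range the condition $d>1$ is automatic unless $e=1$ and $k=0$. For $k=0$ (present only when $e\neq 1$) the theorem gives $v_q(\Phi_e(x))=v_q(x^{e}-1)$, and for each $k\ge 1$ it gives $v_q(\Phi_{e q^{k}}(x))=1$, where we use that $n=e\,q^{k}\ge q\ge 3>2$ so that the exceptional branch of the theorem does not arise. Summing these contributions yields $v_q(x^{o_q(x)}-1)+v_q(a+1)$ when $e\mid(a+1)$ and $e\neq 1$, it yields $v_q(a+1)$ when $e=1$, and it yields $0$ when $e\nmid(a+1)$, as claimed.

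For $q=2$ the argument is identical except that $o_2(x)=1$ for every odd $x$, and the factor $d=2$ now falls under the exceptional branch $v_2(\Phi_2(x))=v_2(x+1)$ rather than contributing $1$; this accounts exactly for the difference in the $q=2$ formula (and is consistent with Proposition \ref{Proposition:Eulerian}). The routine parts---the congruences in the $q\mid x$ case and tallying the contributions of the finitely many relevant $d$---are straightforward; the one point requiring care is the translation between the two notations, namely checking that $\gcd(o_q(x),q)=1$ so that the divisors $e\,q^{k}$ of $a+1$ are enumerated correctly, and confirming that the exceptional valuation branch in the cited theorem is vacuous for odd $q$ but precisely responsible for the correction when $q=2$. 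Once this alignment is in place, the corollary follows by summation.
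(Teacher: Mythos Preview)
Your proof is correct and follows essentially the same approach as the paper: write $\tilde{\sigma}_{-1}(x^a)=\dfrac{\prod_{d\mid(a+1),\,d>1}\Phi_d(x)}{x^a}$ and apply the preceding theorem term by term. The paper's proof is a one-line sketch of exactly this, whereas you have carried out the case analysis in full detail; no substantive differences.
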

\begin{proof}
Write
\[
\tilde{\sigma}_{-1}(x^a)=\frac{x^{a+1}-1}{x^a(x-1)}=\frac{\prod_{n|(a+1),\, n>1}\Phi_n(x)}{x^a}
\]
and use the previous proposition.
\end{proof}

Here is one example of how these results can eliminate candidate spoof perfect factorizations.  Consider a (supposed) spoof perfect factorization of the form $(-3)^a\cdot (-2)^b\cdot 1^3$ with $a\geq 2$ and $b\geq 4$.  Since it is spoof perfect we must have
\[
v_3(\tilde{\sigma}_{-1}((-2)^b))=-v_3(\tilde{\sigma}_{-1}((-3)^a\cdot 1^3))=a\geq 2.
\]
Corollary \ref{Cor:CyclotomicAdic} then implies that $3^2|(b+1)$.  Now, since $o_{19}(-2)=9$ and $9|b+1$, by the same corollary this forces $v_{19}(\tilde{\sigma}_{-1}((-2)^b))\geq 1$, so the $19$-adic valuation of the factorization is positive, yielding the needed contradiction.

We conjecture that our twenty-four bullet points above provide a complete list of the nontrivial, spoof perfect factorizations with three bases.  However, we are currently unable to rule out two additional cases.  First, writing the factorization as $x^a\cdot y^b\cdot z^c$ with $x\leq y\leq z$, we can't rule out the possibility that $x$ is negative, $y=2<z<2^{b+1}-1$, and one of $a$ or $c$ is not $1$.  The other case is similar, and occurs when $x=2$, $2^{a+1}<y\leq z$, and one of $b$ or $c$ is not $1$.

\section{Infinitely many nontrivial, odd, spoof perfect factorizations}\label{Section:OddSpoofsLots}

The three nontrivial, odd, spoof perfect factorizations with three or fewer bases are as follows.
\[
\begin{array}{l}
1^1\\
1^2\cdot (-3)^1\\
1^2\cdot (-3)^2\cdot (-7)^1.
\end{array}
\]
There is a pattern to these factorizations that continues indefinitely.  We can increase the exponent on the Eulerian base by one and then adjoin a new negative base, according to the formula
\begin{equation}\label{Eq:ImportantTrick}
\arraycolsep=1.4pt\def\arraystretch{2.8}
\begin{array}{rcl}
\tilde{\sigma}_{-1}(x^a) & = & \displaystyle{ \frac{x^a+x^{a-1}+\cdots+1}{x^a}= \frac{x^{a+1}+x^a+\cdots+1}{x^{a+1}}\cdot \frac{x^{a+1}+x^{a}+\cdots+x}{x^{a+1}+x^{a}+\cdots+1}}\\
& = & \displaystyle{\tilde{\sigma}_{-1}\left(x^{a+1}\cdot \left(-\sum_{j=0}^{a+1}x^j\right)^1\right)}.
\end{array}
\end{equation}
Notice that the new base $-\sum_{j=0}^{a+1}x^j$ is odd exactly when $a$ is odd, so this process gives a new odd, spoof perfect factorization that changes the Eulerian base.  Thus, the three factorizations above are part of an infinite family that continues
\[
\begin{array}{l}
1^2\cdot (-3)^2\cdot (-7)^2\cdot (-43)^1 \\
1^2\cdot (-3)^2\cdot (-7)^2\cdot (-43)^2\cdot (-1807)^1
\end{array}
\]
and so forth.

The two spoof perfect factorizations found by Descartes and Voight, respectively, do \emph{not} lie in the infinite family generated by applying this process to $1^1$, but each generates its own infinite family of odd, spoof perfect factorizations.

The ``base expansion trick'' encapsulated in (\ref{Eq:ImportantTrick}) can be applied to \emph{any} of the factors in an even spoof perfect factorization to give another such factorization.  Thus, in particular, each even infinite family with three bases gives rise to three new infinite families with four bases.

There are other (nontrivial) ways to get new spoof perfect factorizations from old ones.  Noting that $\tilde{\sigma}_{-1}((-3)^2\cdot 7^2\cdot 7^2\cdot (-19)^2)=1$, we can adjoin this product to any given spoof perfect factorization and obtain a new spoof perfect factorization with exactly four more bases.  To avoid this type of extension, we say that a spoof perfect factorization $\prod_{i=1}^{k}x_i^{a_i}$ is \emph{primitive} if for each proper subset $S\subsetneq \{1,2,\ldots, k\}$ the factorization $\prod_{i\in S}x_i^{a_i}$ is not spoof perfect.  Requiring primitivity, and bounding the number of bases, strongly limits the number of spoof perfect factorizations, at least among odd factorizations.

\begin{thm}\label{MainTheorem}
For each integer $k\geq 1$, there are finitely many nontrivial, odd, primitive spoof perfect factorizations with $k$ bases.
\end{thm}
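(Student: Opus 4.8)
The plan is to adapt the argument of Nielsen \cite{NielsenBestBound} underlying Proposition \ref{Prop:PaceBound}, which handles the positive case, so that it also controls negative bases and the base $1$; the extra structural input comes from the growth estimates of Proposition \ref{Prop:AbundDefic}, the cyclotomic $q$-adic formula of Corollary \ref{Cor:CyclotomicAdic}, and, decisively, the hypothesis of primitivity. Throughout write $\tilde{\sigma}_{-1}(x^a)=\frac{x^a+x^{a-1}+\cdots+1}{x^a}$, so that a factorization is spoof perfect precisely when the numbers $\tilde{\sigma}_{-1}(x_i^{a_i})$ multiply to $2$.

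First I would dispose of the easy part of the factorization. As the factorization is odd, every base is odd, so each base is either $1$ or has absolute value at least $3$. If $1^c$ occurs as a factor, it contributes $\tilde{\sigma}_{-1}(1^c)=c+1$ to the product; since every other factor exceeds $\tfrac12$ (parts (1) and (3) of Proposition \ref{Prop:AbundDefic}) and the total product is $2$, both $c$ and the number of base-$1$ factors are bounded in terms of $k$. Moreover $\tilde{\sigma}_{-1}(1^1)=2$, so primitivity forces the whole factorization to be $1^1$ as soon as some base-$1$ factor has exponent $1$. Fixing the (finitely many possible) base-$1$ part, with product of contributions $C$, reduces us to bounding factorizations $\prod_{i=1}^{r}x_i^{a_i}$ with all $|x_i|\ge 3$, $r\le k$, and $\prod_{i=1}^{r}\tilde{\sigma}_{-1}(x_i^{a_i})=\rho:=2/C$; here $\rho\ne 1$, since $\rho=1$ would mean $C=2$, that is, that the base-$1$ part is a lone $1^1$, which primitivity has excluded.

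Now I would order the surviving bases by absolute value, $3\le|x_1|\le\cdots\le|x_r|$, and bound the bases and exponents successively by an induction modeled on Nielsen's gap argument. The elementary fact driving it is that $\gcd(x^a+\cdots+1,x)=1$, so $\tilde{\sigma}_{-1}(x^a)$ is already in lowest terms with denominator $|x|^a$, and the partial product $P_j:=\prod_{i\le j}\tilde{\sigma}_{-1}(x_i^{a_i})$ has denominator dividing $\prod_{i\le j}|x_i|^{a_i}$. The smallest base $|x_1|$ is bounded in terms of $k$ at once: by Proposition \ref{Prop:AbundDefic} every factor $\tilde{\sigma}_{-1}(x_i^{a_i})$ lies in the interval $\big(\tfrac{|x_1|-1}{|x_1|},\tfrac{|x_1|}{|x_1|-1}\big)$, so $\rho$ lies in the $r$-th power of that interval, and $\rho\ne 1$ then bounds $|x_1|$. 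Suppose inductively that $x_1,a_1,\dots,x_j,a_j$ are bounded. If $j<r$ then $P_j\ne\rho$ --- otherwise the base-$1$ part together with $x_1^{a_1},\dots,x_j^{a_j}$ would form a proper spoof perfect subfactorization, contradicting primitivity --- so $|\rho-P_j|\ge\big(C\prod_{i\le j}|x_i|^{a_i}\big)^{-1}$, a quantity bounded below by data already controlled. On the other hand $\rho-P_j=P_j\big(\rho/P_j-1\big)$ and $\rho/P_j=\prod_{i>j}\tilde{\sigma}_{-1}(x_i^{a_i})$ is squeezed by Proposition \ref{Prop:AbundDefic} into an interval about $1$ of length $O_k(1/|x_{j+1}|)$; comparing these two estimates forces $|x_{j+1}|=O_k\big(\prod_{i\le j}|x_i|^{a_i}\big)$, so $|x_{j+1}|$ is bounded. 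It then remains to bound $a_{j+1}$: picking a prime $q\mid x_{j+1}$ (necessarily $q\ge 3$) and taking the $q$-adic valuation of the identity $\tilde{\sigma}_{-1}(x_{j+1}^{a_{j+1}})\cdot\prod_{i>j+1}\tilde{\sigma}_{-1}(x_i^{a_i})=\rho/P_j$, the first factor contributes $-a_{j+1}v_q(x_{j+1})$, the value $v_q(\rho/P_j)$ is bounded, and Corollary \ref{Cor:CyclotomicAdic} is used to bound the remaining valuations, whence $a_{j+1}$ is bounded. Running this induction through $j=r$ bounds every base and exponent, and since there are at most $k$ of each, the theorem follows.

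The step I expect to be the main obstacle is this last one, bounding an exponent while the \emph{larger} bases remain uncontrolled. Corollary \ref{Cor:CyclotomicAdic} bounds the contribution $v_q(\tilde{\sigma}_{-1}(x_i^{a_i}))$ of a later base $x_i$ cheaply except when $o_q(x_i)\mid(a_i+1)$, in which case the term $v_q\big(x_i^{o_q(x_i)}-1\big)$ appears; that term is small only when $x_i$ is small, which is exactly what is not yet known --- it is large precisely when $x_i$ is $q$-adically near a root of unity, forcing $|x_i|$ to be enormous. Closing this loop is where the real work lies: one wants either to bound the largest exponent directly, say by running the $q$-adic argument at a prime dividing the maximal-exponent base and then playing the gap estimate $|x_{j+1}|=O_k(\prod_{i\le j}|x_i|^{a_i})$ against the resulting enormous base, or to reorganize the induction so that the bounds on bases and exponents are obtained together. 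The rest is bookkeeping: tracking signs, since positive and negative bases satisfy the different inequalities of Proposition \ref{Prop:AbundDefic}(1)--(4), and ensuring that primitivity is always invoked with the base-$1$ part reattached.
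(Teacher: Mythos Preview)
You have correctly located the obstruction, and it is real: the $q$-adic step does not close. Taking $q\mid x_{j+1}$ and reading off $v_q$ of $\rho/P_j$, the later factors $\tilde{\sigma}_{-1}(x_i^{a_i})$ with $i>j+1$ can contribute positive $q$-valuation of size $v_q(x_i^{o_q(x_i)}-1)+v_q(a_i+1)$, and neither summand is controlled while $x_i$ and $a_i$ are still free. Your two suggested fixes do not escape the circularity: attacking the maximal-exponent base first leaves that exponent sitting inside the denominator bound of the gap estimate you need for the other bases, and ``obtaining bases and exponents together'' is a restatement of the problem rather than a method.

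The paper's proof avoids this loop by a different, and specifically $2$-adic, mechanism. Rather than bounding bases and exponents in a linear induction, it carries a \emph{partial factorization} in which each exponent is either pinned or only bounded below, and shows that after at most $2k$ refinements everything is pinned. The decisive step is the case $L(S)\le 2\le U(S)$: one sends every un-pinned exponent to $\infty$, so that the corresponding factor tends to $x_i/(x_i-1)$. For \emph{odd} $x_i$ this limit has strictly negative $2$-adic valuation, while each pinned factor $\tilde{\sigma}_{-1}(x_i^{b_i})$ has odd denominator and hence nonnegative $2$-adic valuation; invoking Proposition~\ref{Proposition:Eulerian} one concludes that the limiting product $n'$ cannot equal $2$. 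Hence some finite threshold $b$ on the un-pinned exponents already forces abundance or deficiency, and one branches: either one of those exponents is at most $b$ (finitely many values, pin it), or all exceed $b$ and the abundance/deficiency bound on the next base applies. Primitivity enters only to terminate once $L=U=2$.

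The point to take away is that the oddness hypothesis is cashed in through the $2$-adic valuation of the \emph{limit} $x/(x-1)$, not through the $q$-adic valuation of individual finite factors. This is what decouples the exponent bound from knowledge of the later bases and breaks the circularity your scheme runs into.
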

\begin{proof}
We follow the ideas of \cite{Dittmer} and describe a finite process to construct all possible odd, primitive spoof perfect factorizations with at most $k$ bases.  Hereafter, $k$ is fixed and all spoof perfect factorizations we will consider have $\leq k$ bases and are odd.

First, define a \emph{partial factorization} to be an ordered set of triples of the form $(x_i,b_i,c_i)$ where:
\begin{itemize}
\item $x_i\neq -1$ is an odd integer, which we think of as one of the bases in the factorization,
\item $b_i$ is a positive integer, which we think of as a lower bound on the exponent of the base $x_i$, and
\item $c_i$ equals either $b_i$ or $\infty$; in either case we think of $c_i$ as an upper bound on the exponent.
\end{itemize}
Thus, for instance, the set $\{(3,2,\infty),(5,4,4),(-3,8,\infty)\}$ is a partial factorization which tells us that one of the bases is $3$ and the corresponding exponent is at least $2$, another base is $5$ and the corresponding exponent is exactly $4$, and a third base is $-3$ with corresponding exponent at least $8$.  Our ultimate goal is to recursively build up all \emph{complete} factorizations from \emph{partial} factorizations, for odd, primitive spoof perfect factorizations.

Next, we define a strict partial ordering on partial factorizations, which will give us a notion of \emph{improving} a partial factorization.  We put $\{(x_i,b_i,c_i)\}_{i=1}^{m}< \{(y_i,d_i,e_i)\}_{i=1}^{n}$ exactly when the following three conditions hold:
\begin{itemize}
\item[(1)] We have $m\leq n$.  This guarantees that the factorization cannot become shorter.

\item[(2)] For each $i\leq m$, we have $x_i=y_i$, $b_i\leq d_i$, and $c_i\geq e_i$.  Thus, the bases we have already chosen do not change, and the ranges on the previously chosen exponents can only tighten up.

\item[(3)] If $m=n$, then there exists some $i\leq m$ with $c_i=\infty$ and $d_i\neq \infty$.  This is to prevent the possibility of endlessly tightening the range of an exponent, such as
\[
\{(3,2,\infty)\}\to \{(3,4,\infty)\}\to\{3,6,\infty)\}\to \cdots.
\]
\end{itemize}

When improving a partial factorization we see that the new partial factorization always either contains an additional base, or the range on one of the exponents that was previously infinite will now be limited to a single positive integer.  Thus, any chain of improvements will have length at most $2k$, as we can only add at most $k$ bases, and can fix each exponent exactly once.  Hence, we just need a way to limit choices on exponents  and limit choices of bases to be adjoined.

Let $S=\{(x_i,b_i,c_i)\}_{i=1}^{m}$ be a partial factorization.  If $P=\prod_{i=1}^k x_i^{a_i}$ is any odd, primitive spoof perfect factorization that is compatible with $S$ (meaning $S\leq \{(x_i,a_i,a_i)\}_{i=1}^{k}$, and in particular $m\leq k$), then Proposition \ref{Prop:AbundDefic} gives upper and lower bounds on $\tilde{\sigma}_{-1}(\prod_{i=1}^{m}x_i^{a_i})$ using only the information in $S$.  Namely, the lower bound is
\[
L(S):=\tilde{\sigma}_{-1}\left(\prod_{i=1}^{m}x_i^{b_i'}\right)
\]
where
\[
b_i'=\begin{cases}
b_i & \text{if $x_i>0$, or $b_i$ is odd, or $b_i=c_i$},\\
b_{i}+1 & \text{otherwise}.
\end{cases}
\]
The upper bound $U(S)$ is defined similarly.  There are three cases we need to consider.

{\bf Case 1:} $L(S)>2$.  In this case we say that $S$ is abundant, since any factorization compatible with $S$ with the same number of factors is abundant.  To correct this defect, at least one of the remaining bases in $P$ must be negative.  Let $y$ be the largest (under the usual ordering on $\Z$) remaining negative base in $P$.  By Proposition \ref{Prop:AbundDefic}, parts (3) and (4), we have
\[
2=\tilde{\sigma}_{-1}(P)\geq L(S)\prod_{i=m+1}^{k}\tilde{\sigma}_{-1}(y^1)=L(S)\left(1+\frac{1}{y}\right)^{k-m}
\]
and solving the inequality for $y$ we reach
\begin{equation}\label{Eq:Limit1}
0>y\geq \frac{1}{\left(\frac{2}{L(S)}\right)^{1/(k-m)}-1}.
\end{equation}
In particular, we see that the next base in an extension of $S$ is forced to belong to a \emph{finite} interval.  Putting it another way, if we extend $S$ to a new partial factorization with exactly one new base, we may do so in only a finite number of ways, as limited by (\ref{Eq:Limit1}).

{\bf Case 2:} $U(S)<2$.  In this case we say that the partial factorization is deficient.  To correct this defect, at least one of the remaining bases in $P$ must be positive.  Let $y$ be the smallest remaining positive base in $P$. Applying Proposition \ref{Prop:AbundDefic}, parts (1) and (2), we have
\[
2=\tilde{\sigma}_{-1}(P)\leq U(S)\prod_{i=m+1}^k \tilde{\sigma}_{-1}(y^{\infty})=U(S)\left(\frac{y}{y-1}\right)^{k-m}
\]
and solving for $y$ we reach
\[
0<y\leq \frac{1}{1-\left(\frac{U(S)}{2}\right)^{1/(k-m)}}.
\]
Once again, this limits the next base to a \emph{finite} interval.

{\bf Case 3:} $L(S)\leq 2\leq U(S)$.  If $L(S)=U(S)=2$, we have a spoof perfect factorization, and any further extension will not be primitive.  Thus, we may reduce to considering the case when $L(S)<U(S)$, and in particular at least one of the elements of $S$ has $\infty$  as a third coordinate (else, from the definitions we gave for $L(S)$ and $U(S)$, their values would match).

Let $S_b$ be the partial factorization obtained from $S$ by replacing any triple $(x_i,b_i,c_i)$ where $c_i=\infty$ with the new triple $(x_i,b,b)$.  Note that $S<S_b$ as long as
\[
b\geq \max_{\{1\leq i\leq m\, :\, c_i=\infty\}}b_i.
\]
Further,
\[
\lim_{b\to \infty}L(S_b)=\lim_{b\to \infty}U(S_b);
\]
call this common value $n'$.  We now appeal to Proposition \ref{Proposition:Eulerian}, and note that $\tilde{\sigma}_{-1}(x^{\infty})=\frac{x}{x-1}$ has \emph{negative} $2$-adic valuation for any \emph{odd} $x$.  This implies that $n'\neq 2$.

Thus, there exists some (computable) positive integer $b$ such that $S<S_b$ and either $L(S_b)>2$ or $U(S_b)<2$.

Now, if $P$ is any odd, primitive spoof perfect factorization compatible with $S$, then one of two situations holds.  Either
\begin{itemize}
\item $P$ is also compatible with $S'$, where $S'$ is obtained from $S$ by replacing one of the triples $(x_i,b_i,\infty)$ with $(x_i,b_i',b_i')$, for one of the finitely many integers $b_i'\in [b_i,b]$, or
\item $P$ is compatible with $S_b$, so we may reduce to either Case 1 or Case 2 (after replacing $S$ with $S_b$) and adjoin one of finitely many new bases.
\end{itemize}

In every case, there are only finitely many improvements.  Further, as mentioned previously, the longest possible chain of improvements is $2k$, so there are only finitely many partial factorizations compatible with odd, spoof perfect factorizations.
\end{proof}

The process described in the proof of Theorem \ref{MainTheorem} can be turned into a program for finding all odd, spoof perfect factorizations with a given number of bases.  We implemented such a program in \textsc{Mathematica}, also including a subroutine incorporating the $2$-adic valuation conditions expressed in Proposition \ref{Proposition:Eulerian}.  The $k=5$ case terminated after about 30 minutes, yielding a complete list of the corresponding odd, primitive spoof perfect factorizations.  The $k=6$ case took considerably longer.  Over the space of two years we distributed the computation over multiple processes (ranging from between 20 to 80 processors at any given time), totaling over 30 processor years.

The corresponding computation for \emph{positive} factorizations, as done in \cite{Dittmer}, takes around 13.3 hours on a single processor.  A large majority of the extra time used in our computation was spent considering the cases where the first three bases are $3$, $5$, and $15$.  It may be possible to eliminate some of these cases without the need for a brute-force search.

We found a total of twenty-one nontrivial, odd, primitive spoof perfect factorizations with six or fewer bases.  Ten of these factorizations are listed below.  The other eleven can be constructed from those ten by (repeatedly) applying the formula (\ref{Eq:ImportantTrick}) to Eulerian factors, thus increasing the number of bases by one.
\begin{itemize}
\item[(1)] $1^1$
\item[(2)] $1^2\cdot (-3)^2\cdot (-5)^2\cdot 49^1$
\item[(3)] $1^2\cdot (-3)^2\cdot (-3)^2\cdot 7^2\cdot (-19)^1$
\item[(4)] $3^2\cdot 7^2\cdot 7^2\cdot 13^1\cdot (-19)^2$
\item[(5)] $3^2\cdot 7^2\cdot 11^2\cdot 13^2\cdot 22021^1$
\item[(6)] $3^4\cdot 7^2\cdot 11^2\cdot 19^2\cdot (-127)^1$
\item[(7)] $1^2\cdot (-3)^2\cdot (-3)^2\cdot 7^4\cdot (-17)^2\cdot 36413^1$
\item[(8)] $1^2\cdot (-3)^2\cdot (-5)^2\cdot 7^2\cdot (-7)^2\cdot (-2451)^1$
\item[(9)] $3^4\cdot 7^2\cdot 7^2\cdot (-19)^1\cdot 11^2\cdot  (-19)^2$
\item[(10)] $3^4\cdot 7^2\cdot 7^2\cdot (-19)^2\cdot 25^2\cdot (-3751)^1$
\end{itemize}

While each of these factorizations is primitive, and none arises from any other by repeated use of the base expansion trick in (\ref{Eq:ImportantTrick}), it is still sometimes possible to generate one from another.  To see this, first apply the base expansion trick to the Eulerian factor of $1^2\cdot (-3)^2\cdot (-5)^2\cdot 49^1$ to get the new ``derived'' spoof perfect factorization $1^2\cdot (-3)^2\cdot (-5)^2\cdot 49^2\cdot (-2451)^1$.  For any base $x\neq 0$, it happens that
\begin{equation}\label{Eq:SquareSquare}
\tilde{\sigma}_{-1}((x^2)^2)=\tilde{\sigma}_{-1}(x^2\cdot (-x)^2),
\end{equation}
so we can replace $49^2$ with $7^2\cdot (-7)^2$, and thus the eighth listed spoof perfect factorization arises in a natural way from the second.

We can us the same squared-square trick given in (\ref{Eq:SquareSquare}) to get another primitive spoof perfect factorization as follows.  Starting with $1^1$ as a seed, applying (\ref{Eq:ImportantTrick}) to the Eulerian factor three times in succession we have the spoof perfect factorization $1^2\cdot (-3)^2\cdot (-7)^2\cdot (-43)^1$.  Multiplying by $(-3)^2\cdot 7^2\cdot 7^2\cdot (-19)^2$, and then replacing $(-7)^2\cdot 7^2$ with $49^2$, we have the seven-base, primitive spoof perfect factorization
\[
1^2\cdot (-3)^2\cdot (-3)^2\cdot 7^2\cdot (-19)^2\cdot (-43)^1\cdot 49^2.
\]
Doing a partial search with $k=7$, our program found an additional seven-base, odd, spoof perfect factorization:
\[
1^2\cdot (-5)^2\cdot (-5)^2\cdot (-9)^2\cdot 7^2\cdot (-9)^2\cdot (-101251)^1.
\]

\section{More infinite families of odd, spoof perfect factorizations}\label{Section:EndingQuestions}

There are three main properties that can prevent an odd, spoof perfect factorization from corresponding to an actual OPN.  They are:
\begin{itemize}
\item The bases are allowed to be non-primes.
\item The bases are allowed to share prime factors.
\item The bases are allowed to be negative (and possibly zero if one allows trivial factorizations).
\end{itemize}
Thus, if any result on OPNs similarly does not use primality, relative primality, or positivity of the bases, then that result necessarily applies to the corresponding spoof perfect factorizations as well.

Consequently, Voight's spoof perfect factorization, which has prime bases that are pairwise relatively prime, shows us that any purported proof of the nonexistence of OPNs must \emph{necessarily} use the positivity of the bases in the factorization.  Similarly, Descartes's example shows that one must necessarily use the primality of the bases.  We were unable to find any example with only positive, prime bases (allowing for repetitions of bases).

This work raises the question of whether or not there is additional structure inherent in all spoof perfect factorizations, which in turn would limit the structure of OPNs.  For instance, for each of the spoof perfect factorizations listed above, the Eulerian exponent is always $1$.  Is this true in general?

It turns out that, no, the Eulerian exponent can be arbitrarily large for odd, spoof perfect factorizations.  For instance, consider the following 74-base factorization, where we use $[x^a]^b$ to mean that the factor $x^a$ is being repeated $b$ times:
\[
[(-619)^2]^4\cdot [(-31)^2]^7\cdot [(-19)^2]^2\cdot [(-11)^2]^6\cdot [(-7)^4]^{14}\cdot [7^2]^8\cdot 11^2\cdot [37^2]^6 \cdot [67^2]^5\cdot [163^2]^4 \cdot [191^2]^7\cdot [211^2]^2\cdot [2223^2]^8.
\]
If $P$ is this factorization, then $\tilde{\sigma}_{-1}(P)=1/3$.  Therefore, in the odd, spoof perfect factorization $1^5\cdot P$, the Eulerian exponent is $5$.  The odd, spoof perfect factorization $1^{17}\cdot[P]^2$, where we have repeated the factors in $P$ twice, has an even larger Eulerian exponent.  Repeating this process, we can make the Eulerian exponent arbitrarily large.

Other questions that we are currently unable to answer include:
\begin{itemize}
\item Are there only finitely many nontrivial, odd, (not necessarily primitive) spoof perfect factorizations with a given number of bases?  If so, can Proposition \ref{Prop:PaceBound} be modified to give an effective upper bound on the number of such factorizations?
\item Are there infinitely many nontrivial, odd, primitive spoof perfect factorizations that do not use $1$ as a base, and that do not arise from simpler factorizations using (\ref{Eq:ImportantTrick})?  If so, can we additionally guarantee that the bases are pairwise relatively prime?
\end{itemize}

%**Add:
%sigma_{-1}((37^1)*(-3^8)*(-19^2)*(-19^2)*(7^2)*(7^2)*(-19^2)*(7^2)*(7^2)*(-19^2)*(7^2)*(7^2)*(-19^2)*(7^2)*(7^2)) = 2
%somewhere**

\section*{Acknowledgements}

The project was sponsored by the National Security Agency under Grant Number H98230-16-1-0048.  This work was partially supported by a grant from the Simons Foundation (\#281876).

\providecommand{\bysame}{\leavevmode\hbox to3em{\hrulefill}\thinspace}
\providecommand{\MR}{\relax\ifhmode\unskip\space\fi MR }
% \MRhref is called by the amsart/book/proc definition of \MR.
\providecommand{\MRhref}[2]{%
  \href{http://www.ams.org/mathscinet-getitem?mr=#1}{#2}
}
\providecommand{\href}[2]{#2}

\newpage
\pagestyle{empty}

Here is a full list of the authors in the BYU Computational Number Theory Group for ``Odd, spoof perfect factorizations'':
\begin{itemize}
\item Nickolas Andersen
\item Spencer Durham
\item Michael J.\ Griffin
\item Jonathan Hales
\item Paul Jenkins
\item Ryan Keck
\item Hankun Ko
\item Grant Molnar
\item Kyle Niendorf
\item Eric Moss
\item Pace P.\ Nielsen
\item Vandy Tombs
\item Merrill Warnick
\item Dongsheng Wu
\end{itemize}
\noindent Correspondence should be directed to Pace P.\ Nielsen.
\\ Department of Mathematics, Brigham Young University, Provo, UT 84602, USA
\\ Email address: pace@math.byu.edu

\end{document}